\documentclass[pdflatex,sn-mathphys-num]{sn-jnl}

\usepackage{graphicx}%
\usepackage{multirow}%
\usepackage{amsmath,amssymb,amsfonts}%
\usepackage{amsthm}%
\usepackage{mathrsfs}%
\usepackage[title]{appendix}%
\usepackage{xcolor}%
\usepackage{textcomp}%
\usepackage{manyfoot}%
\usepackage{booktabs}%
\usepackage{algorithm}%
\usepackage{algorithmicx}%
\usepackage{algpseudocode}%
\usepackage{listings}%

\theoremstyle{thmstyleone}%
\newtheorem{theorem}{Theorem}[section]

\newtheorem{lemma}[theorem]{Lemma}
\newtheorem{corollary}[theorem]{Corollary}

\theoremstyle{thmstyletwo}%

\theoremstyle{thmstylethree}%

\begin{document}

\title[The Monoid Of Binary Relations On A Set Of Size Four Has Infinite Representation Type]{The Monoid Of Binary Relations On A Set Of Size Four Has Infinite Representation Type}


\author{\fnm{Joseph Daynger} \sur{Ruiz}}\email{jdruiz@arizona.edu}

\affil{\orgdiv{Mathematics}, \orgname{University of Arizona}, \orgaddress{\street{Santa Rita}, \city{Tucson}, \postcode{85721}, \state{Arizona}, \country{USA}}}


\abstract{The problem of determining the representation type of the full transformation monoid was resolved by Ponizovskii, Putcha, and Ringel.
In this paper, we present a similar result for the monoid of binary relations, a partial result for the monoid of $\Lambda$-generalized correspondences, and a sufficient condition for a monoid to have infinite representation type.}

\keywords{Monoid, Representation Type, Correspondence, Relations}

\pacs[MSC Classification]{16G60, 20M30}

\maketitle

\section{Introduction}\label{sec1}

The set of all functions from a set $X$ to itself under composition is called the full transformation monoid on $X$, denoted $T_X$.
When $X$ is finite and $k$ is a field of characteristic 0, Ponizovskii \cite{Poni90}, Putcha \cite{PUTCHA98}, and Ringel \cite{Ring00} showed that the monoid algebra $k T_X$ is of finite representation type if $|X| \leq 4$ and is of infinite representation type otherwise.

The set of all relations on a set $X$ under composition of relations is called the monoid of binary relations on $X$, denoted $B_X$. 
When $X$ is finite and $k$ is a field, Pérez computed the quiver and relations of the monoid algebra $k B_X$ if $|X| \leq 3$ \cite{Perez2020}. In this case, the monoid algebra $k B_X$ is of finite representation type.
In this paper, we show that $k B_X$ is of infinite representation type otherwise.

Let $\Lambda$ be a finite distributive lattice.
A $\Lambda$-generalized correspondence on a set $X$ is a function from $X \times X$ to $\Lambda$.
Let $R$ and $S$ be two $\Lambda$-generalized correspondences on a set $X$, their product $RS$ is defined by
$$RS(x,z) = \bigvee_{y\in X}R(x,y)\wedge S(y,z)$$
for all $x,z \in X$. 
Guillaume studied these objects in the context of category theory \cite{GUILLAUME2019405}.
The set of all $\Lambda$-generalized correspondences on a set $X$ with the product defined above is a monoid which will be called the monoid of $\Lambda$-generalized correspondences on $X$, denoted $\Lambda^{X \times X}$. As a corollary of the main result, when $X$ is finite and $k$ is a field, the monoid algebra $k \Lambda^{X \times X}$ is of infinite representation type if $|X| \geq 4$ and $\Lambda$ is a nontrivial distributive lattice.

\section{Irreducibility in Monoids}\label{sec2}

A non-unit $p$ in a monoid $M$ is called \textbf{irreducible} if $p=ab$, where $a,b \in M$, implies $a$ is a unit or $b$ is a unit.
A non-unit $r$ in a monoid $M$ is called \textbf{reducible} if there exists some $a,b \in M$ which are not units and satisfy $r=ab$.
Two elements $a,b$ in a monoid $M$ are called \textbf{associates} if there exist two units $u,v \in M$ such that $a = ubv$.

\begin{lemma} \label{M to Kxy}
    Let $M$ be a finite monoid and $k$ be a field. If there exist two irreducible elements $p,q \in M$  that are not associates then the map $f:M \rightarrow k[x,y]/\langle x^2,y^2,xy \rangle$
    defined as follows
    $$f(m) = \begin{cases}
    1 & \text{if } m \text{ is a unit} \\
    x & \text{if } m \text{ and } p \text{ are associates}\\
    y & \text{if } m \text{ and } q \text{ are associates}\\
    0 & \text{otherwise}
    \end{cases}$$
    is a well defined homomorphism of monoids and induces a surjective $k$-algebra homomorphism between $kM$ and $k[x,y]/\langle x^2,y^2,xy \rangle$.
\end{lemma}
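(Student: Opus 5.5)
We need to check three things: that $f$ is well defined, that it is multiplicative with $f(1)=1$, and that its linear extension to $kM$ is surjective.

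\textbf{Well definedness.} The four cases must be mutually exclusive. Units are not associates of non-units: if $m=upv$ with $u,v$ units and $m$ a unit, then $p=u^{-1}mv^{-1}$ would be a unit, contradiction. Associateness is an equivalence relation, since units form a group. Because $p$ and $q$ are not associates, no $m$ is associate to both. So $f$ is a well-defined function into $k[x,y]/\langle x^2,y^2,xy\rangle$, and $f(1)=1$.

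\textbf{Multiplicativity.} We show $f(ab)=f(a)f(b)$ by cases.

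If $a$ is a unit, then $ab$ is a unit, an associate of $p$, an associate of $q$, or none of these exactly when $b$ is. Hence $f(ab)=f(b)=f(a)f(b)$. The case where $b$ is a unit is symmetric.

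Now suppose neither $a$ nor $b$ is a unit, so $f(a),f(b)\in\{x,y,0\}$ and $f(a)f(b)=0$. We must show $f(ab)=0$.

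First, $ab$ is not a unit. In a finite monoid, if $ab$ is a unit then $a$ has a right inverse, so $a\cdot(b(ab)^{-1})=1$. Left multiplication by $b(ab)^{-1}$ is then injective, hence bijective by finiteness, which forces $a$ to be a unit. This is the one place finiteness is needed, and I expect it to be the main subtle point.

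Second, $ab$ is not an associate of $p$. Suppose $ab=upv$. Then $p=(u^{-1}a)(bv^{-1})$ with neither factor a unit, since $u^{-1}a$ a unit would make $a$ a unit. This contradicts irreducibility of $p$. The same argument applies to $q$. Therefore $f(ab)=0$.

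\textbf{Surjectivity.} Extend $f$ linearly to $kM$. Multiplicativity on the basis $M$ gives a $k$-algebra homomorphism. Its image contains $1=f(1)$, $x=f(p)$ and $y=f(q)$, which span $k[x,y]/\langle x^2,y^2,xy\rangle$ as a $k$-vector space. Hence the induced map is surjective.
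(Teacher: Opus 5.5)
Your proof is correct and follows the same route as the paper. You split multiplicativity into the cases ``some factor is a unit'' and ``both factors are non-units'', show $f(ab)=0$ in the latter case because $ab$ is neither a unit nor an associate of $p$ or $q$, and get surjectivity from $1,x,y$ lying in the image. You also prove two points the paper only asserts: that $ab$ is a non-unit, which genuinely needs finiteness and is presupposed by the paper's remark that $ab$ is reducible ``by definition'', and that a product of two non-units cannot be an associate of an irreducible element.
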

\begin{proof}
    The map $f$ is well defined since every element of $M$ satisfies exactly one condition given by the piecewise function and so each element is only mapped to one output.
    We will now show that $f$ is a homomorphism.
    Let $a,b \in M$.
    We will first assume that at least one of the elements is a unit.
    Without loss of generality, we assume that $a$ is a unit.
    Note that $ab$, $ba$, and $b$ are pairwise associate, so under the map $f$ they have the same output.
    That is, $f(ab)=f(ba)=f(b)$.
    Since $a$ is a unit and $f$ maps units to the identity, we can write $f(ab)=1 \cdot f(b)$ and $f(ba)=f(b) \cdot 1$ as $f(ab)=f(a)f(b)$ and $f(ba)=f(b)f(a)$, respectively.
    This shows that $f$ satisfies the homomorphism property when at least one element is a unit.
    Now consider the case that both elements, $a$ and $b$, are not units.
    The product $ab$ is reducible by the definition of reducibility. 
    Since $ab$ is reducible, it cannot be a unit or associate to an irreducible element.
    Thus the image of $ab$ under the map $f$ is zero.
    The image of $a$ and $b$ lie in the ideal generated by $x$ and $y$.
    The square of this ideal is zero, so the product of the images of $a$ and $b$ is also zero.
    That is, $f(ab)=f(a)f(b)$.
    This gives us that $f$ is a homomorphism of monoids.

    The monoid algebra $k M$ satisfies the universal property that for every monoid homomorphism $g$ from $M$ to a $k$-algebra $A$ there exists a unique $k$-algebra homomorphism $\Tilde{g}$ from $k M$ to $A$ which satisfies $g(m)=\Tilde{g}(1m)$ for all $m \in M$. 
    This allows us to lift $f$ to a $k$-algebra homomorphism $\Tilde{f}$ from $kM$ to $k[x,y]/\langle x^2,y^2,xy \rangle$ which has the basis $\{1,x,y\}$ in its image.
    This gives us a surjective $k$-algebra homomorphism between $kM$ and $k[x,y]/\langle x^2,y^2,xy \rangle$, which completes the proof.
\end{proof}

A consequence of Lemma \ref{M to Kxy} will be that for any finite monoid $M$ satisfying the conditions in Lemma \ref{M to Kxy} and any field $k$, the monoid algebra $kM$ is of infinite representation type. We recall the following lemma, which is essential for our purpose.

\begin{lemma} \label{k3}
    For any field $k$, the 3-dimensional commutative $k$-algebra $$k[x,y]/\langle x^2,y^2,xy \rangle$$ is of infinite representation type.
\end{lemma}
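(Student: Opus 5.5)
Our plan is to exhibit, for each $\lambda \in k$, an indecomposable module of dimension $2$ over $A = k[x,y]/\langle x^2,y^2,xy \rangle$, and to show that these modules are pairwise non-isomorphic. This would give infinitely many isoclasses of indecomposables when $k$ is infinite. For finite $k$, we will instead produce indecomposables of unbounded dimension. The cleanest uniform approach is to reduce to the Kronecker quiver. Note that $A$ is local with radical $J = \langle x,y\rangle$, that $J^2=0$, and that $J/J^2$ is $2$-dimensional. A standard fact about radical-square-zero algebras is that their representation theory is governed by the separated quiver. Here the separated quiver has two vertices joined by two arrows, namely the Kronecker quiver $K_2$, which is of infinite representation type over every field.

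Concretely, we will carry this out by hand, so as to avoid citing the separated-quiver machinery. An $A$-module is a vector space $V$ together with two commuting operators $X,Y$ satisfying $X^2=Y^2=XY=YX=0$. For each $n\ge 1$ and each $\lambda\in k$, take $V = k^n \oplus k^n$. Let $X$ and $Y$ act as zero on the second summand and map the first summand into the second: $X$ by the identity $I_n$, and $Y$ by the Jordan block $J_n(\lambda)$. All products of $X$ and $Y$ then vanish, because the image lies in the second summand, which both operators kill. So $V$ is an $A$-module of dimension $2n$.

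The key step, and the expected main obstacle, is to show that these modules are indecomposable and pairwise non-isomorphic. We will compute endomorphisms. An endomorphism preserves the socle $\operatorname{soc} V = \ker X\cap\ker Y$, which equals the second summand. Indeed, an element of the first summand killed by $I_n$ must be zero, so no element outside the second summand lies in both kernels. The endomorphism also preserves $JV$, which is the same second summand. Hence an endomorphism induces a pair $(\phi_1,\phi_2)$ of maps on the top and on the socle, subject to $\phi_2 I = I\phi_1$ and $\phi_2 J_n(\lambda)=J_n(\lambda)\phi_1$. The first condition gives $\phi_1=\phi_2$, so $\phi_1$ commutes with $J_n(\lambda)$. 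Its centralizer is $k[J_n(\lambda)]$, which is local.

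The remaining freedom in an endomorphism is a map from the top to the socle, which is nilpotent modulo the other part. It follows that $\operatorname{End}(V)$ is local, and therefore $V$ is indecomposable. A similar computation with $\phi_2 J_n(\lambda)=J_n(\mu)\phi_1$ and $\phi_1=\phi_2$ invertible shows that $V_{n,\lambda}\cong V_{n,\mu}$ forces $J_n(\lambda)$ and $J_n(\mu)$ to be similar, hence $\lambda=\mu$.

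Finally, since $n$ is unbounded, $A$ has indecomposables of arbitrarily large dimension. By the definition of finite representation type as finitely many isoclasses of finite-dimensional indecomposables, $A$ has infinite representation type over any field. This holds whether or not $k$ is finite.
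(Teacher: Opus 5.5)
Your proof is correct. The paper gives no argument of its own here; it cites Lemma 8.5 of Erdmann--Holm. So the comparison is between deferring to a textbook and your self-contained construction.

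Your module $V_{n,\lambda}$ is the $A$-module attached to the Kronecker representation $(k^n,k^n;I_n,J_n(\lambda))$, and your endomorphism computation is sound. The step you state loosely (``nilpotent modulo the other part'') can be made precise. The map $\phi\mapsto\phi_1$ is a surjective algebra homomorphism $\operatorname{End}_A(V_{n,\lambda})\to k[J_n(\lambda)]\cong k[t]/(t^n)$. Its kernel consists of the endomorphisms that kill the socle and send the top into the socle, and this is an ideal of square zero. Hence $\operatorname{End}_A(V_{n,\lambda})$ is local.

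Over a finite field any proof must produce indecomposables of unbounded dimension. Your single family $\lambda=0$, $n\ge 1$ already does this uniformly for every $k$. The parameter $\lambda$ is extra information: it gives a one-parameter family of pairwise non-isomorphic indecomposables in each even dimension, already in dimension $2$ when $k$ is infinite.

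The citation buys brevity. Your route buys self-containment and makes visible why the algebra has infinite type: its separated quiver is the Kronecker quiver. Composed with the surjection of Lemma~\ref{M to Kxy}, it also gives explicit indecomposable $kM$-modules by restriction of scalars.
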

\begin{proof}
    See Lemma 8.5 in \cite{Erdmann2018}.
\end{proof}

We also need the following lemma.

\begin{lemma} \label{AmodI}
    Let $k$ be a field, $A$ be a $k$-algebra, and $I \subseteq A$ be a two-sided ideal with $I \neq A$. If the factor algebra $A/I$ is of infinite representation type, then $A$ is of infinite representation type.
\end{lemma}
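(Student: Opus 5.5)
The plan is to pull back modules along the quotient map $\pi\colon A \to A/I$ (restriction of scalars) and check that this produces infinitely many pairwise non-isomorphic indecomposable finite-dimensional $A$-modules. Since $A/I$ has infinite representation type, there is an infinite family $\{N_j\}_{j\in J}$ of pairwise non-isomorphic, indecomposable, finite-dimensional $A/I$-modules. Each $N_j$ becomes an $A$-module via $a\cdot n = \pi(a)n$, which I denote $\pi^*N_j$. It is finite-dimensional over $k$ because its underlying vector space is unchanged. The hypothesis $I \neq A$ ensures $A/I$ is a nonzero algebra, so these nonzero modules exist; it also makes the statement non-vacuous.

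The key observation is that $\pi^*$ is a fully faithful functor from $A/I$-modules to $A$-modules. Its essential image is the full subcategory of $A$-modules annihilated by $I$. To see this, note that a $k$-linear map $\phi\colon \pi^*N \to \pi^*N'$ is $A$-linear if and only if it is $A/I$-linear. This holds because $\pi$ is surjective, so every element of $A/I$ has the form $\pi(a)$. Consequently $\operatorname{Hom}_A(\pi^*N,\pi^*N') = \operatorname{Hom}_{A/I}(N,N')$.

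Two consequences follow directly.

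\begin{enumerate}
\item \textbf{Distinctness.} If $\pi^*N_i \cong \pi^*N_j$ as $A$-modules, the isomorphism and its inverse are $A/I$-linear, so $N_i \cong N_j$ and hence $i=j$.
\item \textbf{Indecomposability.} We have $\operatorname{End}_A(\pi^*N_j) = \operatorname{End}_{A/I}(N_j)$. An $A$-module decomposes nontrivially exactly when its endomorphism ring contains an idempotent other than $0$ and $1$. Since $N_j$ is indecomposable, there is no such idempotent, so $\pi^*N_j$ is indecomposable. Alternatively, any $A$-submodule of $\pi^*N_j$ is killed by $I$ and is therefore an $A/I$-submodule, so a direct sum decomposition over $A$ would already be one over $A/I$.
\end{enumerate}

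Thus $\{\pi^*N_j\}_{j\in J}$ is an infinite family of pairwise non-isomorphic indecomposable finite-dimensional $A$-modules, and $A$ has infinite representation type.

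There is no real obstacle here. The only point requiring care is the surjectivity argument identifying $A$-linear maps with $A/I$-linear maps, since both the distinctness and the indecomposability steps rest on it.
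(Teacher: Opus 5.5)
Your proof is correct, and it is essentially the argument behind the result the paper cites (Lemma 8.6 in Erdmann--Holm). The paper gives no proof of its own, only the citation; that proof likewise inflates $A/I$-modules to $A$-modules annihilated by $I$ and notes that this preserves indecomposability and non-isomorphism because homomorphisms, and hence endomorphism rings, are unchanged.
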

\begin{proof}
    See Lemma 8.6 in \cite{Erdmann2018}.
\end{proof}

The following corollary is a consequence of Lemma \ref{M to Kxy}, \ref{k3}, and \ref{AmodI} and is nearly sufficient to prove the claim in the title of this paper.

\begin{corollary} \label{elms to rep}
    Let $M$ be a finite monoid and $k$ be a field. If there exist two irreducible elements $p,q \in M$ that are not associates, then $kM$ is of infinite representation type.
\end{corollary}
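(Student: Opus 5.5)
The plan is to chain the three preceding lemmas directly. Let $M$, $k$, $p$, $q$ be as in the statement. By Lemma \ref{M to Kxy}, the map $f$ defined there is a monoid homomorphism. It lifts to a surjective $k$-algebra homomorphism
$$\tilde f : kM \to A := k[x,y]/\langle x^2,y^2,xy\rangle.$$
Let $I = \ker \tilde f$. This is a two-sided ideal of $kM$, and by the first isomorphism theorem for $k$-algebras, $kM/I \cong A$.

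The next step is to check the hypothesis $I \neq kM$ required in Lemma \ref{AmodI}. This follows because $\tilde f(1_M) = f(1_M) = 1 \neq 0$ in $A$, since $1_M$ is a unit. Hence $1_M \notin I$ and $I$ is proper. Alternatively, $kM/I \cong A$ is $3$-dimensional, hence nonzero.

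By Lemma \ref{k3}, $A$ is of infinite representation type. Representation type is invariant under algebra isomorphism, since isomorphic algebras have equivalent module categories. So $kM/I$ is of infinite representation type. Lemma \ref{AmodI} then gives that $kM$ is of infinite representation type.

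There is no real obstacle here: all the substantive work, the homomorphism property and surjectivity, is contained in Lemma \ref{M to Kxy}. The only points needing a sentence of care are that the kernel is a proper two-sided ideal, and that infinite representation type transfers across the isomorphism $kM/I \cong A$. Finiteness of $M$ guarantees that $kM$ is a finite-dimensional algebra, so the notion of representation type is being used in the same setting as in Lemmas \ref{k3} and \ref{AmodI}.
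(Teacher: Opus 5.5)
Your proposal is correct and follows the paper's argument: the paper obtains this corollary by chaining Lemma \ref{M to Kxy}, Lemma \ref{k3}, and Lemma \ref{AmodI} in the same way. You also make explicit two small points the paper leaves implicit, namely that $\ker\tilde f$ is a proper two-sided ideal and that infinite representation type transfers across the isomorphism $kM/\ker\tilde f \cong k[x,y]/\langle x^2,y^2,xy\rangle$.
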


Predictably, the key step now is to demonstrate that the monoid of binary relations on a set of four elements has two irreducible elements which are not associates. 

\begin{lemma} \label{its clear when you know them}
    The monoid of relations on a set of four elements has two irreducible elements that are not associates.
\end{lemma}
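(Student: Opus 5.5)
The plan is to exhibit explicit $4\times 4$ Boolean matrices, identifying a relation $R$ on $X=\{1,2,3,4\}$ with the $0/1$ matrix $(R(i,j))$, so that composition becomes Boolean matrix multiplication. The units of $B_X$ are exactly the permutation matrices. Two relations are associates iff one is obtained from the other by permuting rows and columns. Hence the number of $1$'s, and the multisets of row sums and column sums, are invariants of the associate class. To make non-associativity immediate, I would choose the two irreducible matrices $p$ and $q$ to have different numbers of $1$'s.

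For $p$ I would take a classical ``prime'' Boolean matrix, for instance
$$p=\begin{pmatrix}1&1&0&0\\1&0&1&0\\0&1&0&1\\0&0&1&1\end{pmatrix},$$
which has $8$ ones. For $q$ I would search for a second prime matrix with a different count, for example by adding $1$'s to $p$ in a way that preserves the structural features used below. A natural first candidate is a matrix whose rows are all $2$-subsets of $X$ but one, or the complement-type pattern with rows of size $3$. I expect to check a couple of candidates before settling on one.

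To prove that such a matrix $p$ is irreducible, suppose $p=ab$ with $a,b\in B_X$. Row $i$ of $ab$ is the union of the rows of $b$ indexed by the $1$'s in row $i$ of $a$. The argument would proceed in four steps.
\begin{enumerate}
\item No row of $a$ is empty, since $p$ has no zero row.
\item Every row of $b$ that is actually used is contained in some row of $p$.
\item The rows of $p$ are pairwise incomparable $2$-sets, and every $2$-subset of $X$ lies in at most one row of $p$. So each row of $p$ must either equal a single used row of $b$ or be a union of proper pieces (singletons or empty sets) of $b$-rows.
\item Use a counting argument: $b$ has only $4$ rows, so it cannot supply all the singleton pieces that are needed while also keeping $a$ non-permutation. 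This should force $a$ and $b$ to both be permutation matrices, with the possible exception of degenerate cases that can be ruled out by direct inspection.
\end{enumerate}
The symmetric argument on columns, writing $p^{T}=b^{T}a^{T}$, can be used to close off the cases where $b$ has repeated or nested rows.

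The main obstacle is the case analysis in step (iii)--(iv), where a row of $p$ is a union of several smaller rows of $b$. That situation has to be excluded using the requirement that those same small rows of $b$ must also assemble every other row of $p$ exactly, without spilling into a forbidden entry. I would organise this around the ``row space'' of $b$, meaning the join-closure of its rows: it must contain all rows of $p$ while having at most $4$ join-irreducible generators.

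If the hand analysis for the second matrix $q$ becomes unwieldy, the fallback is to note that the question is finite. There are only $2^{16}$ relations on a $4$-element set, so irreducibility of a chosen $q$ and its non-association with $p$ can be confirmed by exhaustively checking all factorisations $ab$ with $a,b$ non-permutations. Once both matrices are confirmed irreducible, and $p$ and $q$ are non-associates by the count of $1$'s, Corollary~\ref{elms to rep} applies.
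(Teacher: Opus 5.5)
There is a genuine gap: the proposal never produces the second irreducible element $q$.

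\textbf{What works.} Your handling of $p$ is sound. Its rows are the edges of a $4$-cycle, and steps (ii)--(iv) reduce to a clean count. Suppose $s$ rows of $p$ occur verbatim among the rows of $b$. Each remaining row needs two singleton rows of $b$, so $s$ plus the size of the union of the remaining rows is at most $4$. This forces $s\in\{0,4\}$: $s=0$ makes $b$ a permutation matrix, and $s=4$ makes $a$ one. Note that the conclusion is that \emph{one} factor is a permutation, not ``both'' as you wrote in (iv); if both were, $p$ would be a unit.

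\textbf{The gap.} The substance of the lemma is a \emph{second}, non-associate irreducible element, and you do not exhibit one. The candidates you name fail:
\begin{itemize}
\item A $4\times 4$ matrix cannot have ``all $2$-subsets but one'' as its rows, since that is five rows.
\item The matrix whose rows are the four $3$-subsets is $J-I$, which is reducible. With $P$ the $4$-cycle permutation matrix, $I+P+P^2+P^3=J$ gives $J-I=(I+P)(P+P^2)$, and neither factor is a permutation matrix.
\item Adding a single $1$ to $p$ also fails. The enlarged row then contains another row, and any such matrix factors as a product of two non-units.
\end{itemize}
The exhaustive-search fallback is a plan rather than an argument. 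It would verify a chosen $q$, but the existence of a suitable $q$ is exactly what has to be shown. The paper obtains it from Example 2.5 of de Caen and Gregory.

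\textbf{How to close it.} Take, for instance, $q$ with rows $\{1\},\{3,4\},\{2,4\},\{2,3\}$, i.e.\ $1\oplus(J_3-I_3)$, which has $7$ ones and so is not associate to $p$. The same count works:
\begin{itemize}
\item The row $\{1\}$ forces $\{1\}$ to be a row of $b$.
\item If $s'$ of the three $2$-set rows occur verbatim in $b$, then $1+s'$ plus the size of the union of the other $2$-set rows is at most $4$. This gives $s'\in\{0,3\}$.
\item If $s'=0$, the rows of $b$ are the four singletons, so $b$ is a permutation matrix.
\item If $s'=3$, the rows of $b$ are exactly the rows of $q$. These form an antichain, so $a$ is a permutation matrix.
\end{itemize}
Alternatively, the matrix with rows $\{1,2,3\},\{1,4\},\{2,4\},\{3,4\}$ ($9$ ones) is also irreducible by a similar count. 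With an explicit $q$ of this kind, your route gives a self-contained elementary proof in place of the paper's citation.
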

\begin{proof} See Example 2.5 in \cite{DECAEN1981119} with the knowledge that the boolean matrix algebra of all $n$ by $n$ matrices and the monoid of relations on a set of size $n$ are isomorphic.
\end{proof}

Lemma \ref{its clear when you know them} and Corollary \ref{elms to rep} prove the following theorem, which is the theorem focused on in the title of the paper.

\begin{theorem} \label{Big 1}
    The monoid of relations on a set of four elements has infinite representation type over any field.
\end{theorem}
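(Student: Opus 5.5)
The plan is to combine Lemma \ref{its clear when you know them} with Corollary \ref{elms to rep}, so the argument is essentially a verification that the hypotheses of the corollary hold. Let $X$ be a set with $|X|=4$ and $M=B_X$ the monoid of binary relations on $X$. First, note that $M$ is finite: it has $2^{16}$ elements. Its identity is the diagonal relation, and its units are exactly the relations given by permutations of $X$, namely the permutation matrices in the Boolean matrix picture.

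Second, invoke Lemma \ref{its clear when you know them}, which supplies two irreducible elements $p,q\in M$ that are not associates. The identification used there is that $B_X$ is isomorphic to the monoid of $4\times 4$ Boolean matrices under Boolean matrix multiplication. This is the only nontrivial input, and it is the step I would expect to be the real obstacle if it were not already available.

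If one wanted to check it directly, the approach would be as follows. Take $p$ and $q$ to be two $4\times 4$ Boolean matrices with no Boolean factorization into two non-permutation matrices. De Caen and Gregory give such examples: matrices whose rows form an antichain and whose row space admits no nontrivial decomposition. Choose two of them with different numbers of ones. Multiplying on either side by permutation matrices preserves the number of ones, so $p$ and $q$ cannot be associates. Irreducibility is the part requiring genuine combinatorial work. Here I would argue that any factorization $p=ab$ forces $a$ or $b$ to have exactly one $1$ in each row and column, hence to be a permutation matrix.

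Finally, apply Corollary \ref{elms to rep} to the finite monoid $M$, an arbitrary field $k$, and the elements $p,q$. This gives that $kM$ has infinite representation type. Tracing the corollary back shows why no assumption on $k$ is needed. Lemma \ref{M to Kxy} gives a surjection $kM\to k[x,y]/\langle x^2,y^2,xy\rangle$. Lemma \ref{k3} says the target has infinite representation type for every field. Lemma \ref{AmodI} then lifts infinite type back to $kM$, since the kernel is a proper two-sided ideal. So the conclusion holds over any field.
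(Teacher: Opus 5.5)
Your proposal is correct and is exactly the paper's argument: the two non-associate irreducibles from Lemma \ref{its clear when you know them} (de Caen--Gregory, via the identification of $B_X$ with $4\times 4$ Boolean matrices) are fed into Corollary \ref{elms to rep}, and that corollary works over any field because Lemmas \ref{M to Kxy}, \ref{k3}, \ref{AmodI} impose no condition on $k$. Your optional direct-check sketch is not needed, though telling associate classes apart by their number of ones does work: for instance, $I+P$ with $P$ a $4$-cycle (eight ones) and $1\oplus(I_3+P_3)$ with $P_3$ a $3$-cycle (seven ones) are both irreducible in $B_4$.
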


We now turn our attention to the monoid of relations on sets of size five or greater and the monoid of $\Lambda$-generalized correspondences on sets of size four or greater that were introduced at the beginning of the paper.
When $\Lambda$ is the two-element distributive lattice and $X$ is a set, the monoid of relations on $X$, $B_X$, and the monoid of $\Lambda$-generalized correspondences on $X$, $\Lambda^{X \times X}$, are isomorphic.
For a finite set $X$, proving that the monoid algebra $k \Lambda^{X \times X}$ has infinite representation type for any field $k$ and any nontrivial distributive lattice $\Lambda$ also proves that the monoid algebra $k B_X$ has infinite representation type due to the aforementioned isomorphism.
Because of this, we will focus the remainder of the discussion on the monoid of $\Lambda$-generalized correspondences.

\section{Induction in Monoid Algebras}\label{sec3}

Let $k$ be a field and $A$ be a finite dimensional $k$-algebra.
An element $e$ in $A$ is called an idempotent if that element satisfies $e^2 = e$.
Given an idempotent $e$ in $A$, the subspace $eAe$ is a $k$-algebra with identity $e$ where the product is the product on $A$ restricted to $eAe$.
Furthermore, $Ae$ can be given the structure of an $A-eAe$ bimodule.
Using an idempotent $e$ in $A$, we can construct a functor from the category of left $eAe$-modules to the category of left $A$-modules denoted $\operatorname{Ind}_e$ and defined as follows for a left $eAe$-module $V$:
$$\operatorname{Ind}_e(V) = Ae \otimes_{eAe} V\text{.}$$
This functor is called induction.

\begin{lemma} \label{ind is good}
    The functor $\operatorname{Ind}_e$ is fully faithful, and if $V$ is an indecomposable left $eAe$-module then $\operatorname{Ind}_e(V)$ is an indecomposable left $A$-module.
\end{lemma}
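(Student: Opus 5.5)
The plan is to prove full faithfulness first, by exhibiting $\operatorname{Res}_e = e(-)$, the functor $W \mapsto eW$ from left $A$-modules to left $eAe$-modules, as a one-sided inverse of $\operatorname{Ind}_e$ on morphisms. Indecomposability will then follow from the endomorphism rings.

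\textbf{Step 1: the unit of the adjunction is an isomorphism.} Recall that $\operatorname{Ind}_e$ is left adjoint to $\operatorname{Res}_e$, since
$$\operatorname{Hom}_A(Ae\otimes_{eAe}V,W)\cong \operatorname{Hom}_{eAe}(V,\operatorname{Hom}_A(Ae,W))\cong \operatorname{Hom}_{eAe}(V,eW),$$
using $\operatorname{Hom}_A(Ae,W)\cong eW$ via $\varphi\mapsto\varphi(e)$. The unit $\eta_V:V\to e(Ae\otimes_{eAe}V)$ sends $v\mapsto e\otimes v$. We have
$$e(Ae\otimes_{eAe}V)=eAe\otimes_{eAe}V\cong V,$$
because $Ae = eAe\oplus(1-e)Ae$ as right $eAe$-modules, so tensoring commutes with this splitting. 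Under this identification $\eta_V$ is the canonical isomorphism $eAe\otimes_{eAe}V\cong V$, hence $\eta_V$ is an isomorphism for every $V$. It is the standard fact that a left adjoint is fully faithful exactly when its unit is a natural isomorphism. To keep the argument self-contained, I will also verify this directly: for $g:\operatorname{Ind}_e V\to \operatorname{Ind}_e V'$, restricting to $e(\cdot)$ and composing with the identifications $\eta$ gives an $eAe$-map $h:V\to V'$. Since $\operatorname{Ind}_e V$ is generated as an $A$-module by $e\otimes V$, and $g(e\otimes v)=e\,g(e\otimes v)=e\otimes h(v)$, it follows that $g=\operatorname{Ind}_e(h)$. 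Conversely, $h$ is recovered from $\operatorname{Ind}_e(h)$ by the same restriction. So $\operatorname{Hom}_{eAe}(V,V')\to\operatorname{Hom}_A(\operatorname{Ind}_eV,\operatorname{Ind}_eV')$ is bijective.

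\textbf{Step 2: indecomposability.} Full faithfulness gives a ring isomorphism $\operatorname{End}_{eAe}(V)\cong\operatorname{End}_A(\operatorname{Ind}_e V)$, which preserves idempotents. A module $W$ is indecomposable exactly when $W\neq 0$ and $\operatorname{End}(W)$ has no idempotents other than $0$ and $1$. If $V$ is indecomposable, then $V\neq0$, so $\operatorname{Ind}_eV\neq0$, since $e\operatorname{Ind}_eV\cong V$. Its endomorphism ring likewise has only the trivial idempotents, so $\operatorname{Ind}_e(V)$ is indecomposable.

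\textbf{Main obstacle.} The main delicate point is the identification $e(Ae\otimes_{eAe}V)\cong V$. This means checking carefully that multiplication by $e$ commutes with the tensor product, which uses the right $eAe$-module splitting of $Ae$, and that the resulting map is the inverse of $v\mapsto e\otimes v$. Once that is settled, the remaining steps are formal.
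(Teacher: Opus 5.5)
Your proof is correct. The paper gives no argument of its own and only cites Proposition 4.6 and Corollary 4.10 of Steinberg's book. Your route is essentially the standard argument behind those results, written out in self-contained form: the adjunction $\operatorname{Ind}_e \dashv \operatorname{Res}_e$ has invertible unit $V \cong e\operatorname{Ind}_e(V)$, and idempotents are then transported through $\operatorname{End}_{eAe}(V)\cong\operatorname{End}_A(\operatorname{Ind}_e V)$. Nothing in it uses finite-dimensionality, so it works for any ring and any idempotent.
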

\begin{proof}
    See Proposition 4.6 and Corollary 4.10 in \cite{St16}.
\end{proof}

The following is an immediate consequence of Lemma \ref{ind is good}. 

\begin{corollary} \label{almost there}
    Let $k$ be a field and $A$ be a finite dimensional $k$-algebra. If $e$ in $A$ is an idempotent different from the identity of $A$ such that $eAe$ is of infinite representation type then the $k$-algebra $A$ is of infinite representation type.
\end{corollary}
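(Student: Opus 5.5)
We argue by contraposition, using Lemma \ref{ind is good} to carry an infinite family of indecomposable $eAe$-modules into an infinite family of indecomposable $A$-modules. Suppose $eAe$ has infinite representation type. Then there is an infinite collection $\{V_i\}_{i\in I}$ of finite dimensional indecomposable left $eAe$-modules that are pairwise non-isomorphic. For each $i$ set $W_i = \operatorname{Ind}_e(V_i) = Ae \otimes_{eAe} V_i$. The plan is to show that the $W_i$ are finite dimensional, indecomposable, and pairwise non-isomorphic left $A$-modules. Then $A$ has infinitely many isomorphism classes of finite dimensional indecomposable modules, so $A$ has infinite representation type.

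The steps are as follows.
\begin{enumerate}
\item \emph{Finite dimensionality.} Since $A$ is finite dimensional, $Ae$ is finite dimensional over $k$. The tensor product $Ae \otimes_{eAe} V_i$ is a quotient of $Ae \otimes_k V_i$, so it is finite dimensional.
\item \emph{Indecomposability.} This follows directly from the second part of Lemma \ref{ind is good}.
\item \emph{Distinctness.} Suppose $W_i \cong W_j$. Since $\operatorname{Ind}_e$ is fully faithful, the isomorphism and its inverse are images of morphisms $\phi: V_i \to V_j$ and $\psi: V_j \to V_i$. Faithfulness gives $\psi\phi = \mathrm{id}$ and $\phi\psi = \mathrm{id}$, because their images under $\operatorname{Ind}_e$ are identities. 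Hence $V_i \cong V_j$, and so $i=j$.
\end{enumerate}

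A second route to distinctness avoids full faithfulness. Multiplication gives an isomorphism $eAe \otimes_{eAe} V \cong V$, which yields $e\operatorname{Ind}_e(V) \cong V$ as $eAe$-modules. So the $V_i$ can be recovered from the $W_i$.

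I do not expect a real obstacle, since both needed facts are packaged in Lemma \ref{ind is good}. The only point needing care is the convention for ``representation type'': it should mean infinitely many isomorphism classes of finite dimensional indecomposable modules, so that finiteness of $\dim_k W_i$ has to be checked as in the first step. The hypothesis $e \neq 1$ plays no role in the argument. It only rules out the trivial case $eAe = A$. Also $e \neq 0$ holds automatically, because $eAe$ has infinite representation type and so is nonzero.
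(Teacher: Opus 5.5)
Your proposal is correct and follows the paper's approach: the paper calls this an immediate consequence of Lemma \ref{ind is good}. Your argument is that argument with the details filled in: induce an infinite family of pairwise non-isomorphic indecomposable $eAe$-modules, then use full faithfulness (or $e\operatorname{Ind}_e(V)\cong V$) to keep the induced modules pairwise non-isomorphic. One cosmetic point: your argument is direct, not by contraposition, so drop that opening phrase.
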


Corollary \ref{almost there} becomes more monoid theoretic when we consider idempotents in a monoid.
Furthermore, the following corollary, along with Theorem \ref{Big 1}, almost proves the remaining claims in the abstract.

\begin{corollary} \label{even closer}
    Let $M$ be a finite monoid and $k$ be a field. If there exists an idempotent $e$ in $M$ such that the monoid $eMe$ has two irreducible elements that are not associates, then $kM$ is of infinite representation type.
\end{corollary}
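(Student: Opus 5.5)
We combine Corollary \ref{elms to rep} with Corollary \ref{almost there}, identifying the corner algebra $e(kM)e$ with the monoid algebra $k(eMe)$.

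First, the monoid side. The set $eMe=\{eme : m\in M\}$ is closed under multiplication, since $(eme)(em'e)=e(mem')e$. It has identity $e$, because $e(eme)=eme=(eme)e$ by idempotence. So $eMe$ is a finite monoid with identity $e$. Note that its units and irreducible elements are taken inside $eMe$, which is exactly how the hypothesis is phrased. By Corollary \ref{elms to rep}, the monoid algebra $k(eMe)$ has infinite representation type.

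Second, the identification. Regard $e$ as an idempotent of $A=kM$. Then $eAe$ is spanned by the elements $eme$ with $m\in M$. These are precisely the elements of $eMe$, and distinct monoid elements are linearly independent in $kM$. Hence $eAe$ has basis $eMe$, and its multiplication is the monoid multiplication extended bilinearly, with identity $e$. This gives a $k$-algebra isomorphism $eAe\cong k(eMe)$, so $eAe$ has infinite representation type.

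Third, we apply Corollary \ref{almost there}, which requires $e\neq 1$. This is the only delicate point, and it is handled by a case split. If $e\neq 1_M$, then $e$ is a different basis element of $kM$ from the identity, so Corollary \ref{almost there} applies and $kM$ has infinite representation type. If $e=1_M$, then $eMe=M$, so $M$ itself has two non-associate irreducible elements and Corollary \ref{elms to rep} gives the conclusion directly.

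Alternatively, one can avoid the case split altogether. Lemma \ref{ind is good} holds for any idempotent, including the identity, so infinitely many non-isomorphic indecomposable $eAe$-modules induce to infinitely many non-isomorphic indecomposable $A$-modules; non-isomorphism is preserved because $\operatorname{Ind}_e$ is fully faithful.
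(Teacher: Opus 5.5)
Your proof is correct and follows the argument the paper intends. The paper writes no proof; it combines Corollary~\ref{elms to rep}, applied to the monoid $eMe$, with Corollary~\ref{almost there}, via the identification $e(kM)e \cong k(eMe)$. Your explicit handling of the case $e = 1_M$ is a worthwhile addition, since Corollary~\ref{almost there} formally excludes it and the paper passes over it silently.
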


We now conclude by showing that the monoid of $\Lambda$-generalized correspondences on a set $X$ of four or more elements contains an idempotent $e$ such that $e \Lambda^{X \times X} e$ is isomorphic to the monoid of relations on a set of four elements.

\begin{lemma} \label{M in L}
    Let $X$ be a set of size four or greater and $\Lambda$ be a nontrivial finite distributive lattice. There exists an idempotent $e$ in $\Lambda^{X \times X}$ such that $e \Lambda^{X \times X} e$ is isomorphic to the monoid of relations on a set of four elements.
\end{lemma}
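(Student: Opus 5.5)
The plan is to build $e$ from a single atom of $\Lambda$ placed on the diagonal of a four-element subset of $X$. This forces every element of $e\Lambda^{X\times X}e$ to take only two values, so it behaves like a Boolean matrix. Write $0$ and $1$ for the bottom and top of $\Lambda$; they are distinct because $\Lambda$ is nontrivial. Since $\Lambda$ is finite and nontrivial, it has an atom $a$, meaning $a \neq 0$ and the interval $[0,a]$ is $\{0,a\}$. Fix a subset $Y \subseteq X$ with $|Y| = 4$. Define $e \in \Lambda^{X\times X}$ by $e(x,x) = a$ for $x \in Y$, and $e(x,z) = 0$ for all other pairs.

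The first step is to compute $eRe$ for an arbitrary $R$. Using the definition of the product, $eRe(x,z) = \bigvee_{y,w} e(x,y)\wedge R(y,w)\wedge e(w,z)$. The only possibly nonzero term is the one with $y=x$ and $w=z$, and it is nonzero only when $x,z\in Y$. Hence
$$eRe(x,z) = \begin{cases} a \wedge R(x,z) & \text{if } x,z \in Y,\\ 0 & \text{otherwise.}\end{cases}$$
Applying this with $R = e$ gives $e^2 = e$, since $a\wedge a = a$. Because $a$ is an atom, $a\wedge R(x,z)\in\{0,a\}$. Therefore $e\Lambda^{X\times X}e$ is contained in the set $N$ of correspondences that vanish off $Y\times Y$ and take values in $\{0,a\}$ on $Y\times Y$. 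Conversely, every $S\in N$ satisfies $eSe = S$, so $e\Lambda^{X\times X}e = N$. This is a monoid with identity $e$.

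Next I would define $\varphi : N \to B_Y$ by $\varphi(S) = \{(x,z)\in Y\times Y : S(x,z) = a\}$. This map is clearly a bijection from $N$ onto the set of relations on $Y$, and it sends $e$ to the identity relation. For the homomorphism property, note that for $S,T \in N$ and $x,z\in Y$ we have $ST(x,z) = \bigvee_{y\in Y} S(x,y)\wedge T(y,z)$. The terms with $y\notin Y$ vanish, and $ST(x,z) = 0$ whenever $x$ or $z$ lies outside $Y$. On $\{0,a\}$ the operations $\wedge$ and $\vee$ agree with Boolean "and" and "or" under the identification $0\mapsto$ false and $a\mapsto$ true, because $\{0,a\}$ is a two-element sublattice. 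Consequently $ST(x,z) = a$ exactly when some $y\in Y$ has $(x,y)\in\varphi(S)$ and $(y,z)\in\varphi(T)$. This says $\varphi(ST) = \varphi(S)\varphi(T)$, and since $|Y| = 4$, $B_Y$ is the monoid of relations on a set of four elements.

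The main obstacle is the choice of $e$ rather than any computation. The naive choice, the characteristic function of the diagonal of $Y$, gives $e\Lambda^{X\times X}e \cong \Lambda^{Y\times Y}$. That monoid is strictly larger than $B_4$ unless $\Lambda$ has exactly two elements. Replacing the diagonal value $1$ by an atom $a$ is what truncates the entries to $\{0,a\}$. The step that needs care is therefore verifying that every value $a \wedge \lambda$ lies in $\{0,a\}$, which is precisely the atom property. Once that holds, the rest is bookkeeping, and distributivity of $\Lambda$ is not actually needed for this lemma.
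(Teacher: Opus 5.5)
Your proof is correct and is essentially the paper's argument: the paper also places a cover $c$ of $\mathbf{0}$ (your atom $a$) on the diagonal of a four-element subset $Y$, checks idempotence, and identifies $e\Lambda^{X\times X}e$ with $\{\mathbf{0},c\}^{Y\times Y}\cong B_Y$, though you are more explicit in proving the equality $e\Lambda^{X\times X}e = N$ and the homomorphism property of $\varphi$. One small caveat: your double-join expansion of $eRe$ itself uses distributivity, so your closing remark that distributivity is unnecessary holds only if you instead compute $(eR)e$ in two stages, which works because $e$ is diagonal.
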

\begin{proof}
    Choose $Y$ to be a subset of $X$ with exactly four elements.
    A partial order can be put on the distributive lattice $\Lambda$ as follows: $a \in \Lambda$ is less than or equal to $b \in \Lambda$ when $a \wedge b = a$.
    The meet of all the elements in the lattice, $\bigwedge_{a\in \Lambda}a$, is the unique minimal element of $\Lambda$ when considered as a poset with respect to this partial order.
    Let $\mathbf{0}$ denote the meet of all the elements in the lattice $\Lambda$.
    Because $\Lambda$ is a nontrivial finite distributive lattice, the minimal element $\mathbf{0}$ has a cover.
    Let $c$ denote one such cover.
    The cover $c$ of $\mathbf{0}$ has the property that the meet of $c$ and any other element in the lattice is $\mathbf{0}$ or $c$ and the join of $\mathbf{0}$ and $c$ is $c$.
    Let $e_{Y,c}$ be the element in $\Lambda^{X \times X}$ defined as follows:
    $$e_{Y,c}(x,y) = \begin{cases}
    c & \text{if } x=y \text{ and } x\in Y \\
    \mathbf{0} & \text{otherwise}
    \end{cases}.$$
    We will first show the element $e_{Y,c}$ is idempotent.
    Note that $e_{Y,c}e_{Y,c}(x,y)$ when $x$ is not in $Y$ is the join over all the meet of $\mathbf{0}$ with some other element of $\Lambda$.
    The meet of any element of $\Lambda$ and $\mathbf{0}$ is $\mathbf{0}$ and the join of any element with itself is itself, so $e_{Y,c}e_{Y,c}(x,y)$ is $\mathbf{0}$ when $x$ is not in $Y$.
    A similar argument can be made when $y$ is not in $Y$.
    When both $x$ and $y$ are in $Y$ then $e_{Y,c}e_{Y,c}(x,y)$ is the join over all the meet of $e_{Y,c}(x,z)$ with $e_{Y,c}(z,y)$ for $z$ in $X$. When $x$ does not equal $y$ at least one member of the meet $e_{Y,c}(x,z)$ with $e_{Y,c}(z,y)$ is $\mathbf{0}$ and so the join over all such meets is also $\mathbf{0}$.
    Finally when both $x$ and $y$ are in $Y$ and equal then the meet of $e_{Y,c}(x,z)$ with $e_{Y,c}(z,y)$ for $z=x=y$ is $c$ since $e_{Y,c}(a,a)$ is $c$ for $a$ in $Y$.
    Since $c$ is the cover of $\mathbf{0}$, the join of any number of $\mathbf{0}$ and $c$ is $c$.
    These cases show $e_{Y,c}e_{Y,c}$ is an is the same function from $X \times X$ to $\Lambda$ as $e_{Y,c}$ and so $e_{Y,c}e_{Y,c} = e_{Y,c}$ which means $e_{Y,c}$ is an idempotent.
    Note that an arbitrary element $\alpha$ in the set $e_{Y,c} \Lambda^{X \times X} e_{Y,c}$ satisfies $\alpha(x,y) = \mathbf{0}$ if $x \in X\setminus Y$ or $y \in X\setminus Y$ and $\alpha(x,y)$ is $\mathbf{0}$ or $c$ for all $x,y \in Y$.
    The set of elements $e_{Y,c} \Lambda^{X \times X} e_{Y,c}$ with the product given by the product on $\Lambda^{X \times X}$ restricted to $e_{Y,c} \Lambda^{X \times X} e_{Y,c}$ is a monoid and is isomorphic to $\{\mathbf{0},c\}^{Y \times Y}$ where $\{\mathbf{0},c\}$ is the two-element distributive lattice.
    Since $Y$ is a set of size four, we have that $\{\mathbf{0},c\}^{Y \times Y}$ is isomorphic to $B_Y$ which is the monoid of relations on a set of four elements.
    Thus we have shown that there is an idempotent $e \in \Lambda^{X \times X}$ such that $e \Lambda^{X \times X} e$ is isomorphic to the monoid of relations on a set of four elements.
\end{proof}

Lemma \ref{M in L}, Corollary \ref{even closer} and Lemma \ref{its clear when you know them} together prove the final theorem of the paper. 

\begin{theorem}
    The monoid of $\Lambda$-generalized correspondences on sets of size four or greater when $\Lambda$ is a nontrivial finite distributive lattice has infinite representation type over any field.
\end{theorem}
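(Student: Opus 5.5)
The plan is to chain together the three results the paper has just assembled. Let $X$ be a finite set with $|X| \geq 4$, let $\Lambda$ be a nontrivial finite distributive lattice, and set $M = \Lambda^{X \times X}$. Then $M$ is a finite monoid, since it is the set of functions from the finite set $X \times X$ to the finite set $\Lambda$. By Lemma \ref{M in L} there is an idempotent $e = e_{Y,c} \in M$, where $Y \subseteq X$ has four elements and $c$ covers $\mathbf{0}$, such that the monoid $eMe$ is isomorphic to the monoid of relations $B_Y$ on a four-element set.

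Next I would transport the two irreducible, non-associate elements along this isomorphism. Lemma \ref{its clear when you know them} gives two irreducible elements $p, q$ of $B_Y$ that are not associates. Irreducibility and associateness are defined purely in terms of the monoid structure: units, products, and the existence of factorizations. Any monoid isomorphism $\varphi : B_Y \to eMe$ therefore carries units to units and non-units to non-units. It also preserves factorizations in both directions. Hence $\varphi(p)$ and $\varphi(q)$ are irreducible and not associates in $eMe$.

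One point needs care. Units, irreducibility and associates must be taken inside the monoid $eMe$, whose identity is $e$, and not inside $M$. This is exactly how the hypothesis of Corollary \ref{even closer} is phrased, so no further comparison with $M$ is required.

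Finally I would apply Corollary \ref{even closer} to $M$ with this idempotent $e$, which gives that $kM$ is of infinite representation type for every field $k$. The only step where something could go wrong is the hidden requirement in Corollary \ref{almost there} that $e$ differ from the identity. In the extreme case $|X| = 4$ and $\Lambda = \{\mathbf{0}, c\}$, the element $e_{Y,c}$ is the identity of $M$. There I would bypass induction altogether: $M \cong B_Y$, and Theorem \ref{Big 1} (equivalently Corollary \ref{elms to rep}) applies directly.

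In all other cases $e \neq 1_M$, although the argument does not really depend on this. The identification $k(eMe) \cong e(kM)e$ and the full faithfulness of $\operatorname{Ind}_e$ from Lemma \ref{ind is good} still yield infinitely many non-isomorphic indecomposable $kM$-modules from those of $k(eMe)$. These in turn come from Corollary \ref{elms to rep} applied to $eMe$.
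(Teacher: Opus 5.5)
Your proposal is correct and follows the paper's proof: combine Lemma \ref{M in L}, Lemma \ref{its clear when you know them} (transported along the isomorphism $eMe \cong B_Y$), and Corollary \ref{even closer}. Your extra treatment of the case $|X|=4$, $|\Lambda|=2$ is a valid refinement the paper passes over silently: there $e_{Y,c}$ is the identity of $M$, so the hypothesis $e \neq 1$ of Corollary \ref{almost there} formally fails, but Theorem \ref{Big 1} applies directly.
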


Although useful due to its monoid theoretic criterion, Corollary \ref{even closer} is not suited for regular monoids because regular monoids do not contain any irreducible elements.

\bibliography{sn-bibliography}

\end{document}